\documentclass[12pt]{amsart}

\usepackage{amsmath}
\usepackage{amssymb}
\usepackage{longtable}

\theoremstyle{plain}
% Th text of this environment is typesetted in italics
\newtheorem{lemma}{Lemma}[section]
\newtheorem{proposition}[lemma]{Proposition}
\newtheorem{theorem}[lemma]{Theorem}
\newtheorem{corollary}[lemma]{Corollary}
\newtheorem{problem}[lemma]{Problem}

\theoremstyle{definition}
% The text of this environment is typesetted in roman letters

\newtheorem{remark}[lemma]{Remark}
\newtheorem{example}[lemma]{Example}

\setlength{\topmargin}{0truemm}
\setlength{\oddsidemargin}{-0.4truemm}
\setlength{\evensidemargin}{-0.4truemm}
\setlength{\textheight}{232truemm}
\setlength{\textwidth}{160truemm}
\setlength{\footskip}{10truemm}

\pagestyle{plain}

\title{Epimorphisms between $2$-bridge knot groups and their crossing numbers} 

\author{Masaaki Suzuki}

\address{Department of Frontier Media Science, Meiji University}

\email{macky@fms.meiji.ac.jp}

\begin{document}

\begin{abstract}
Suppose that 
there exists an epimorphism from the knot group of a $2$-bridge knot $K$ 
onto that of another knot $K'$. 
In this paper, 
we study the relationship between their crossing numbers $c(K)$ and $c(K')$. 
Especially it is shown that $c(K)$ is greater than or equal to $3 c(K')$ 
and we estimate how many knot groups a $2$-bridge knot group maps onto. 
Moreover, we formulate the generating function which 
determines the number of $2$-bridge knot groups admitting epimorphisms 
onto the knot group of a given $2$-bridge knot. 
\end{abstract}

\maketitle
\section{Introduction}\label{sect:intro}

Let $K$ be a knot and $G(K)$ the knot group, namely, 
the fundamental group of the exterior of $K$ in $S^3$. 
We denote by $c(K)$ the crossing number of $K$. 
Recently, many papers have studied epimorphisms between knot groups. 
One of the main interests of their papers was Simon's conjecture: 
every knot group maps onto at most finitely many knot groups. 
For example, Boileau, Boyer, Reid, and Wang in \cite{BBRW} showed that 
Simon's conjecture is true for $2$-bridge knots. 
Finally, Agol and Liu in \cite{agolliu} proved that Simon's conjecture holds for all knots. 

In \cite{kitano-suzuki1} and \cite{HKMS}, 
the existence and non-existence of 
a meridional epimorphism between knot groups of prime knots with up to $11$ crossings 
are determined completely. 
We say that a homomorphism from $G(K)$ to $G(K')$ is {\it meridional} 
if a meridian of $G(K)$ is sent to a meridian of $G(K')$, 
see also \cite{chasuzuki}. 
This result raises the following question: 
if there exists an epimorphism from $G(K)$ onto $G(K')$, 
then is $c(K)$ greater than or equal to $c(K')$?
This question is also mentioned in \cite{kitano-suzuki2}. 
If the answer is affirmative, then we obtain another proof for Simon's conjecture. 
This paper gives a partial affirmative answer for this question. 
That is to say, 
if there exists an epimorphism from the knot group of a $2$-bridge knot $K$ 
onto that of another knot $K'$, 
then $c(K)$ is greater than or equal to $3 c(K')$. 

In order to prove this result, we make use of the Ohtsuki-Riley-Sakuma construction. 
Ohtsuki, Riley, and Sakuma in \cite{ORS} established 
a systematic construction of epimorphisms between $2$-bridge knot groups. 
Besides, Garrabrant, Hoste, and Shanahan in \cite{GHS} gave 
necessary and sufficient conditions for any set of $2$-bridge knots 
to have an upper bound with respect to the Ohtsuki-Riley-Sakuma construction. 
Conversely, it is shown 
that all epimorphisms between $2$-bridge knot groups arise from the Ohtsuki-Riley-Sakuma construction, 
as a consequence of Agol's result announced in \cite{agol}. 
Aimi, Lee, and Sakuma give another proof in \cite{ALS} for this result.

In this paper, we consider the crossing numbers of $2$-bridge knots 
whose knot groups admit epimorphisms onto a $2$-bridge knot group. 
By using this result, 
we estimate how many knot groups a $2$-bridge knot group maps onto. 
Furthermore, 
we formulate the generating function 
which determines the number of $2$-bridge knots $K$ 
admitting epimorphisms from $G(K)$ onto the knot group of a given $2$-bridge knot. 

Throughout this paper, we do not distinguish a knot from its mirror image, 
since their knot groups are isomorphic and we discuss epimorphisms between knot groups. 
The numberings of the knots with up to $10$ and $11$ crossings follow
Rolfsen's book \cite{rolfsen} and 
the web page KnotInfo \cite{CL} by Cha and Livingston, respectively

\section{$2$-bridge knot and continued fraction expansion}\label{sect:cfe}

In this section, we recall some well-known results on $2$-bridge knots. 
See \cite{bzh}, \cite{murasugi} in detail, for example. 

A $2$-bridge knot corresponds to a rational number $r = q/p \in {\mathbb Q}$. 
Then we denote by $K(q/p)$ such a $2$-bridge knot. 
Schubert classified $2$-bridge knots as follows. 

\begin{theorem}[Schubert] 
Let $K(q/p)$ and $K(q'/p')$ be $2$-bridge knots. 
These knots are equivalent if and only if 
the following conditions hold. 
\begin{enumerate}
\item $p = p'$. 
\item Either $q \equiv \pm q' \pmod{p}$ or $q q' \equiv \pm 1 \pmod{p}$. 
\end{enumerate}
\end{theorem}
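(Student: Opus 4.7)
The plan is to establish both directions by exploiting the correspondence between $2$-bridge knots and rational tangles together with the classification of lens spaces via double branched covers.

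For the ``if'' direction, I would exhibit for each arithmetic possibility an explicit equivalence between the corresponding $4$-plat presentations. The case $q \equiv q' \pmod{p}$ corresponds to modifying the first coefficient of a continued fraction expansion by an integer, which adds or removes full twists in an outer twist region of the $4$-plat and so visibly preserves the knot type. The case $q \equiv -q' \pmod{p}$ yields the mirror image of the original knot, and since the present paper identifies a knot with its mirror, the two are regarded as equivalent. The cases $q q' \equiv \pm 1 \pmod{p}$ correspond to rotating the $4$-plat through $90$ degrees and swapping the roles of the two bridges; a short computation with continued fractions shows that this rotation sends the slope $q/p$ to a representative of $q^{-1}/p$ modulo the previous equivalences.

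For the ``only if'' direction, the cleanest tool is the double branched cover. The double cover of $S^3$ branched over $K(q/p)$ is canonically identified with the lens space $L(p, q)$. If $K(q/p)$ is equivalent to $K(q'/p')$, then their branched double covers are homeomorphic, so $L(p, q) \cong L(p', q')$. The Reidemeister--Franz classification of lens spaces then immediately yields $p = p'$ together with $q \equiv \pm (q')^{\pm 1} \pmod{p}$, which are exactly the four conditions in (2).

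The main obstacle in this program is justifying that the branched cover really does detect the knot type: in principle, a given lens space could be the double branched cover of several inequivalent links in $S^3$, so one must invoke the equivariant Smith-type result that every orientation-preserving involution on $L(p,q)$ with quotient $S^3$ is conjugate to the standard one. This is the deepest ingredient; once it is in place, everything else reduces to careful bookkeeping with continued fraction expansions and the geometry of the $4$-plat diagram. An alternative, more elementary but considerably more tedious route would be to follow Schubert's original combinatorial argument, normalizing bridge presentations directly and tracking how the normal form changes under ambient isotopy.
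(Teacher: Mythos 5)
The paper does not prove this theorem at all: it is recalled as a classical result of Schubert, with the reader referred to the textbooks of Burde--Zieschang--Heusener and Murasugi. So there is no ``paper proof'' to compare against, and your proposal should be judged on its own. As a sketch it is essentially the standard modern argument and is sound in outline: the ``if'' direction by explicit $4$-plat/rational-tangle isotopies realizing $q \mapsto q + p$, $q \mapsto -q$ (mirror image, which the paper deliberately ignores), and $q \mapsto q^{-1} \bmod p$ (the bridge-swapping rotation); the ``only if'' direction by passing to the double branched cover $L(p,q)$ and invoking the Reidemeister--Franz--Brody classification of lens spaces, which gives exactly $p = p'$ and $q' \equiv \pm q^{\pm 1} \pmod{p}$.

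One structural remark: the ingredient you single out as the ``main obstacle'' --- that every involution of $L(p,q)$ with quotient $S^3$ is conjugate to the standard one (Hodgson--Rubinstein) --- is not actually needed for the proof as you have organized it. That equivariant uniqueness statement is required only if one wants to run the branched-cover argument in the \emph{converse} direction, i.e.\ to deduce equivalence of the knots from a homeomorphism of their branched covers. You prove the ``if'' direction by direct isotopy, so for the ``only if'' direction it suffices that the branched double cover is a well-defined invariant of the knot type, which is immediate. The genuine remaining work in your sketch is the ``short computation with continued fractions'' for the $qq' \equiv \pm 1$ case and, of course, the lens space classification itself, both of which are reasonably cited as classical for a theorem of this vintage.
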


By using this theorem, it is sufficient to consider $r \in {\mathbb Q} \cap (0,\frac{1}{2})$. 
%For example, $K(2/13)$ and $K(11/13)$ are equivalent. 
Note that $K(0)$ is the trivial link and that $K(1/2)$ is the Hopf link. 
A rational number $q/p \in {\mathbb Q} \cap (0 , \frac{1}{2})$ can 
be expressed as a continued fraction expansion:
\[
 \frac{q}{p} = [a_1,a_2, \ldots, a_{m-1},a_m] = 
\frac{1}{a_1 + \frac{1}{a_2 + \frac{1}{\ddots \frac{1}{a_{m-1} + \frac{1}{a_m}}}}} ,
\]
where $a_1 > 0$. 
Note that a rational number admits many continued fraction expansions. 
For example, we have $29/81 = [3,-5,4,1,-2] = [2,1,3,1,5]$. 
%A $2$-bridge knot admits some expressions of a rational number as continued fraction expansion. 
It is easy to see that the following properties are satisfied. 
First, we can delete zeros in a continued fraction expansion by using 
\[
 [a_1,a_2,\ldots,a_{i-2}, a_{i-1},0,a_{i+1},a_{i+2}, \ldots, a_m] 
= [a_1,a_2,\ldots,a_{i-2}, a_{i-1} + a_{i+1}, a_{i+2},\ldots, a_m] .
\]
If we consider a $2$-bridge knot, we may assume that $a_1, a_m \neq \pm 1$, 
since 
\[
 [a_1,a_2,\ldots, a_{m-1},\pm 1] = [a_1,a_2,\ldots, a_{m-1} \pm 1] 
\]
and $K([a_1,a_2,\ldots,a_m])$ is equivalent to $K([a_{m},a_{m-1},\ldots,a_1])$ 
up to mirror image. 
Moreover, the Euclidean algorithm allows us to take 
a continued fraction expansion such that 
all $a_i$ in $[a_1,a_2,\ldots,a_m]$ are positive.  
%Namely, let $q_0=q, q_1=p $, 
%\[
%\begin{array}{rll}
%q_0 &=  a_1 q_1  + q_2 & (a_1 > 0 , 0 \leq q_2 < q_1)\\
%q_1 &=  a_2 q_2  + q_3 & (a_2 > 0 , 0 \leq q_3 < q_1)\\
%& \vdots  \\
%q_{m-2} &=  a_{m-1} q_{m-1}  + q_m & (a_{m-1} > 0 , 0 \leq q_m < q_{m-1})\\
%q_{m-1} &=  a_m q_m  & (a_m > 0)
%\end{array}
%\]

If a rational number $r$ is expressed as $[a_1,a_2,\ldots,a_m]$ where $a_i > 0$ and $a_1,a_m \geq 2$, 
then the continued fraction expansion is called {\it standard}. 
%In this form, $m$ is called the {\it length} of $[a_1,a_2,\ldots,a_m]$. 
By the above arguments, 
we can always take the standard continued fraction expansion 
of the rational number $r$ for a $2$-bridge knot $K(r)$. 
Furthermore the standard continued fraction expansion gives us 
the unique continued fraction expansion of the rational number which corresponds to a $2$-bridge knot 
in the following sense. 
Let $K(q/p)$ and $K(q'/p')$ be $2$-bridge knots. 
Suppose that these rational numbers are written as the standard continued fraction expansions 
$q/p = [a_1,a_2,\ldots,a_m], q'/p' = [a_1',a_2',\ldots,a_{m'}']$. 
It is known that 
$K(q/p)$ and $K(q'/p')$ are equivalent up to mirror image if and only if 
\[
 (a_1,a_2,\ldots,a_m) = (a_1',a_2',\ldots,a_{m'}') \mbox{ or } (a_{m'}',a_{m'-1}',\ldots,a_1'). 
\]

Thistlethwaite \cite{thi}, Kauffman \cite{kauffman} and Murasugi \cite{murasugi1},\cite{murasugi2} independently 
proved the first Tait conjecture. 
Hence, we can determine the crossing number of a $2$-bridge knot 
by using the standard continued fraction expansion. 
Namely, the crossing number for the standard continued fraction $[a_1,a_2,\ldots, a_m]$ 
is given by 
\[
 c(K([a_1,a_2,\ldots,a_m])) = \sum_{i=1}^m a_i .
\]

\section{Epimorphisms between $2$-bridge knot groups}

We have the following remarkable result about epimorphisms between $2$-bridge knot groups. 
An epimorphism between $2$-bridge knot groups is always meridional. 
Moreover, the rational numbers for these $2$-bridge knots have the following relationship. 

\begin{theorem}[Ohtsuki-Riley-Sakuma \cite{ORS}, Agol \cite{agol}, Aimi-Lee-Sakuma \cite{ALS}]\label{thm:ors}
Let $K(r), K(\tilde{r})$ be $2$-bridge knots, 
where $r = [a_1,a_2,\ldots,a_m]$. %is a rational number and $a_i > 0$. 
If there exists an epimorphism $\varphi : G(K(\tilde{r})) \to G(K(r))$, 
then $\varphi$ is meridional and $\tilde{r}$ can be written as 
\[
 \tilde{r} = 
[\varepsilon_1 {\bf a}, 2 c_1, 
\varepsilon_2 {\bf a}^{-1}, 2 c_2, 
\varepsilon_3 {\bf a}, 2 c_3, 
\varepsilon_4 {\bf a}^{-1}, 2 c_4, 
\ldots, 
\varepsilon_{2n} {\bf a}^{-1}, 2 c_{2n}, \varepsilon_{2n+1} {\bf a}] , 
\]
where ${\bf a} = (a_1, a_2,\ldots,a_m), {\bf a}^{-1} = (a_m, a_{m-1},\ldots,a_1)$, 
$\varepsilon_i = \pm 1 \, \, (\varepsilon_1 = 1)$, and $c_i \in {\mathbb Z}$. 
\end{theorem}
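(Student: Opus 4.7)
The theorem combines a construction (Ohtsuki-Riley-Sakuma) with a classification (Agol, Aimi-Lee-Sakuma), and I would treat the two halves separately.

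For the construction half, given $\tilde r$ of the prescribed form, the plan is to build the epimorphism geometrically via 2-fold branched covers. The 2-bridge knots $K(r)$ and $K(\tilde r)$ lift to lens spaces, and the block structure of $\tilde r$ --- alternating copies of ${\bf a}$ and ${\bf a}^{-1}$ separated by twist regions of length $2c_i$ --- is exactly the combinatorial data needed to build an equivariant branched map between the bridge-sphere decompositions. The crucial point is that each $c_i$ is even: this is what allows the intermediate twist region to be filled in by a tangle that is trivial relative to its boundary, so that the lifted map descends through the hyperelliptic involution to a well-defined map of knot exteriors. Meridionality is then automatic, since the meridian generator in the first ${\bf a}$-block is sent to the corresponding meridian generator of $K(r)$.

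For the classification half, I would use the $PSL_2(\mathbb{C})$ character variety. Riley's parametrization cuts out the character variety of a 2-bridge knot group as an affine curve defined by a single polynomial. An epimorphism $\varphi : G(K(\tilde r)) \to G(K(r))$ pulls the Riley curve of $K(r)$ back into the Riley curve of $K(\tilde r)$, and the distinguished discrete faithful character of $K(r)$ pulls back to a character of $K(\tilde r)$ whose image has finite co-volume. Hyperbolic rigidity, which is Agol's key ingredient, then pins this character down and forces $\varphi$ to send meridian to (a conjugate of a) meridian. Once meridionality is in hand, a combinatorial analysis --- the Aimi-Lee-Sakuma route --- of how the bridge sphere of $K(\tilde r)$ covers that of $K(r)$ recovers the specific block structure of $\tilde r$ block by block, with the twist regions $2c_i$ appearing as the obstruction to naive concatenation.

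The principal obstacle is the classification half, and within it the meridional property itself. There is no a priori reason why an abstract group-theoretic surjection should respect the peripheral structure; ruling this out requires Agol's theorem on essential surfaces in 2-bridge knot exteriors together with the strong constraints imposed by the 2-bridge geometry. Once meridionality is secured, the combinatorial identification of the block structure of $\tilde r$ is tractable but tedious: one must verify that the standard continued fraction expansion of $\tilde r$, computed from the Schubert invariant via the Euclidean algorithm, indeed breaks uniquely into blocks $\varepsilon_i {\bf a}^{\pm 1}$ interleaved with $2c_i$, and that no cancellation across blocks can produce a shorter word violating uniqueness of the standard form.
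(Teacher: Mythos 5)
The first thing to say is that the paper does not prove this statement at all: Theorem~\ref{thm:ors} is imported verbatim from \cite{ORS}, \cite{agol}, and \cite{ALS} and is followed immediately by a remark, not a proof. So there is no internal argument to compare yours against; what can be assessed is whether your outline would stand on its own. Note also that the statement is only the ``only if'' direction --- it asserts that \emph{every} epimorphism is meridional and forces the continued-fraction form of $\tilde r$. Your entire first half, the construction of an epimorphism from a given expansion of $\tilde r$, proves the converse, which is not what is claimed (the paper does use the converse later, in Theorem~\ref{thm:generatingfunction}, but it is not part of this statement). Within that half there is also a misreading: the twist entries are $2c_i$ with $c_i$ an arbitrary integer, so the entries are even but $c_i$ itself need not be.

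The genuine gap is in your classification half, which is the actual content of the theorem. Your plan --- pull back the Riley curve, locate the discrete faithful character, invoke hyperbolic rigidity to force meridionality --- fails at the starting gate for non-hyperbolic targets: the most important case in this paper is $K(r)=K(1/3)$, the trefoil, a torus knot with no complete hyperbolic structure, no discrete faithful $PSL_2(\mathbb{C})$ character of finite covolume, and hence nothing for Mostow rigidity to act on. More fundamentally, ``rigidity pins down the character and forces $\varphi$ to be meridional'' is a gesture at exactly the step that constitutes the theorem: Agol's contribution is a classification of Kleinian groups generated by two parabolics, and the Aimi--Lee--Sakuma route is a small-cancellation analysis of which simple loops on the $2$-bridge sphere die in the link group --- in both cases meridionality and the block structure of $\tilde r$ come out of the same global argument, not (as you have it) meridionality first by rigidity and then a separate ``tractable but tedious'' combinatorial cleanup. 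As written, your proposal identifies the hard step correctly but does not supply an argument for it, and the argument it does sketch is unavailable in the cases the paper most relies on.
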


\begin{remark}
%Let $r = [a_1,\ldots,a_m]$ be a rational number. 
If a rational number $\tilde{r}$ is expressed as 
\[
 \tilde{r} = 
[\varepsilon_1 {\bf a}, 2 c_1, 
\varepsilon_2 {\bf a}^{-1}, 2 c_2, 
\varepsilon_3 {\bf a}, 2 c_3, 
\varepsilon_4 {\bf a}^{-1}, 2 c_4, 
\ldots, 
\varepsilon_{2n} {\bf a}^{-1}, 2 c_{2n}, \varepsilon_{2n+1} {\bf a}] , 
\]
then we say that 
$\tilde{r}$ has an expansion of {\it type} $2n+1$ with respect to ${\bf a} = (a_1, a_2,\ldots,a_m)$. 
\begin{enumerate}
\item 
In this paper, we do not need to consider an expression of type $2n$ with respect to ${\bf a}$, 
since $K([\varepsilon_1 {\bf a},2 c_1, \ldots, 2 c_{2n-1}, \varepsilon_{2n} {\bf a}^{-1}])$ 
is a $2$-bridge link. 
\item If $c_i=0$ and $\varepsilon_i \cdot \varepsilon_{i+1}=-1$, then 
\begin{align*}
\tilde{r}&=[\ldots,\, \varepsilon_{i-1} {\bf a}^{\pm1} ,\, 2 c_{i-1},\, \varepsilon_{i} {\bf a}^{\mp1} ,\, 0,\, 
 \varepsilon_{i+1} {\bf a}^{\pm1} ,\, 2 c_{i+1}, \varepsilon_{i+2} {\bf a}^{\mp1} ,\, \ldots ] \\
&=[\ldots,\varepsilon_{i-1} {\bf a}^{\pm1} ,\, 2 c_{i-1},\, 0,\, 
 2 c_{i+1},\, \varepsilon_{i+2} {\bf a}^{\mp1} ,\, \ldots ] \\
&=[\ldots,\, \varepsilon_{i-1} {\bf a}^{\pm1} ,\, 2 (c_{i-1} + c_{i+1}),\, \varepsilon_{i+2} {\bf a}^{\mp1} ,\, \ldots ] .
\end{align*}
It follows that $\tilde{r}$ has type $2n-1$. 
Hence we do not deal with the case $c_i=0$, $\varepsilon_i \cdot \varepsilon_{i+1}=-1$. 
\end{enumerate}
\end{remark}

\begin{example}\label{ex:ORS} 
For example, we consider a $2$-bridge knot $K(5/27)$. 
The rational number $5/27$ has continued fraction expansions: 
\[
 \frac{5}{27} = [5,2,2] = [3,0,3,-2,3].
\]
The second expression implies that the crossing number of $K(5/27)$ is $9$. 
The last expression is of type $3$ with respect to ${\bf a} = (3)$. 
Therefore the knot group $G(K(5/27))$ 
admits an epimorphism onto the trefoil knot group $G(3_1) = G(K(1/3)) = G(K([3]))$. 
Similarly, we have 
\[
 \frac{1}{9} = [9] = [3,0,3,0,3], \quad 
 \frac{19}{45} = [2,2,1,2,2] = [3,-2,3,-2,3] . 
\]
It follows that there exist epimorphisms from $G(K(1/9))$ and $G(K(19/45))$ 
onto the trefoil knot group. 

The previous papers \cite{kitano-suzuki1} and \cite{HKMS} determined 
all the pairs of prime knots with up to $11$ crossings 
which admit meridional epimorphisms between their knot groups. 
The results in \cite{kitano-suzuki1}, \cite{HKMS} coincide with the above examples. 
Note that $K(1/9) = 9_1$, $K(5/27) = 9_6$, and $K(19/45) = 9_{23}$. 
\end{example}

In general, even if $[a_1,\ldots,a_m]$ is the standard continued fraction expansion, 
and $\tilde{r}$ is of type $2n+1$ with respect to $(a_1,\ldots,a_m)$, 
then this expansion of $\tilde{r}$ may not be standard. 
However, we can get the standard continued fraction expansion 
and then determine the crossing number of $K(\tilde{r})$. 

\begin{theorem}\label{thm:crossingnumber}
Let $[a_1,\ldots,a_m]$ be the standard continued fraction expansion. 
Suppose that a rational number $\tilde{r}$ has a continued fraction expansion 
of type $2n+1$ with respect to ${\bf a} = (a_1,\ldots,a_m)$: 
\[
 \tilde{r} = 
[\varepsilon_1 {\bf a}, 2 c_1, 
\varepsilon_2 {\bf a}^{-1}, 2 c_2, 
\varepsilon_3 {\bf a}, 2 c_3, 
\varepsilon_4 {\bf a}^{-1}, 2 c_4, 
\ldots, 
\varepsilon_{2n} {\bf a}^{-1}, 2 c_{2n}, \varepsilon_{2n+1} {\bf a}] . 
\]
Then the crossing number of $K(\tilde{r})$ is given by 
\[
 c(K(\tilde{r})) = 
(2n + 1) |{\bf a}| + \sum_{i=1}^{2n} (2 |c_i| - \psi (i) - \bar{\psi} (i)), 
\]
where $|{\bf a}| = \sum_{i=1}^m a_i$ and 
\[
\psi (i) = 
\left\{
\begin{array}{ll}
1 & \, \, \varepsilon_i \cdot c_i < 0 \\
0 & \, \, \varepsilon_i \cdot c_i \geq 0 
\end{array}
\right. , \qquad
\bar{\psi} (i) = 
\left\{
\begin{array}{ll}
1 & \, \, c_i \cdot \varepsilon_{i+1} < 0 \\
0 & \, \, c_i \cdot \varepsilon_{i+1} \geq 0 
\end{array}
\right. .
\]
\end{theorem}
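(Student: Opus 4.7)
The plan is to rewrite the given type-$2n+1$ expansion of $\tilde r$ as a standard continued fraction expansion, keep track of how the sum of entries evolves, and then invoke the first Tait conjecture from Section~\ref{sect:cfe}, which identifies $c(K(\tilde r))$ with the sum of entries of the standard expansion.

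First I would analyze the sign pattern of the given expansion. Because $a_1,\ldots,a_m$ are positive, every entry inside a block $\varepsilon_i {\bf a}^{\pm 1}$ has sign $\varepsilon_i$, and each separator $2c_i$ contributes a single entry of sign $\mathrm{sgn}(c_i)$ (if $c_i=0$, the separator collapses by $[\ldots,x,0,y,\ldots]=[\ldots,x+y,\ldots]$; by part~(2) of the preceding Remark this only has to be handled when $\varepsilon_i\varepsilon_{i+1}=1$). Consequently, consecutive entries differ in sign precisely at those boundaries where $\varepsilon_i c_i<0$ or $c_i\varepsilon_{i+1}<0$, so the total number of sign changes in the full list equals $\sum_{i=1}^{2n}\bigl(\psi(i)+\bar\psi(i)\bigr)$, while the total sum of absolute values of entries is $(2n+1)|{\bf a}|+\sum_{i=1}^{2n}2|c_i|$.

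Next I would eliminate the sign changes one at a time, from left to right, using the identity
\[
[\ldots, A, -B, C_1, C_2, \ldots] \;=\; [\ldots, A-1,\; 1,\; B-1,\; -C_1,\; -C_2,\; \ldots],
\]
applied at the leftmost position where a positive $A$ is followed by a negative $-B$. At the very first application both $A$ and $B$ are at least $2$---they are either terminal entries $a_1$ or $a_m$ of an ${\bf a}$-block, or a $|2c_i|\ge 2$---so the move is clean, and one checks directly that it preserves the value $\tilde r$, decreases the total sum of absolute values by exactly $1$, and removes precisely one sign change. After $\sum_i(\psi(i)+\bar\psi(i))$ such moves every entry is nonnegative, and residual $0$'s and terminal $1$'s can be cleaned up via $[\ldots,x,0,y,\ldots]=[\ldots,x+y,\ldots]$ and $[\ldots,a,\pm 1]=[\ldots,a\pm 1]$ recalled in Section~\ref{sect:cfe}, neither of which changes the sum of the now-nonnegative entries. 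Since $\varepsilon_1=1$ guarantees $a_1\ge 2$ at the left end, and a symmetric argument applies at the right end, the outcome is the standard continued fraction expansion of $\tilde r$. Summing its entries and invoking the first Tait conjecture yields
\[
c(K(\tilde r)) \;=\; (2n+1)|{\bf a}| + \sum_{i=1}^{2n}\bigl(2|c_i|-\psi(i)-\bar\psi(i)\bigr),
\]
which is the claimed formula.

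The main technical obstacle is controlling the cascade of the sign-elimination identity when it propagates into the interior of an ${\bf a}$-block whose middle contains entries equal to $1$, so that $B-1=0$ (or, after several applications, $A-1=0$) arises. In those situations the basic identity has to be combined on the fly with a $0$-collapse, and one must verify that the compound move still accounts for exactly one unit of reduction in the sum of absolute values per sign change eliminated, and introduces no spurious sign changes at the junction with the already-processed prefix. A careful case analysis on the signs of the flipped suffix, together with the handling of the degenerate separators $c_i=0$ with $\varepsilon_i\varepsilon_{i+1}=1$, is where most of the bookkeeping concentrates.
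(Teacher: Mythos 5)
Your proposal is correct and is in essence the paper's own argument: both reduce the type-$(2n+1)$ expansion to the standard one, record that the sum of absolute values of the entries is $(2n+1)|{\bf a}|+\sum_{i=1}^{2n}2|c_i|$ while each sign change costs exactly one unit of that sum, and then invoke the Tait conjecture. The only real difference is the reduction lemma: the paper (Lemma \ref{lem:nonpositivecfe}, proved via the matrix representation of continued fractions, together with Corollary \ref{cor:36}) flips an entire maximal negative block in one move, $[\ldots,a_k,-b_1,\ldots,-b_l,c_1,\ldots]=[\ldots,a_k-1,1,b_1-1,b_2,\ldots,b_{l-1},b_l-1,1,c_1-1,\ldots]$, which decrements only the entries adjacent to the two sign changes at the ends of the block and never touches the interior, whereas your identity $[\ldots,A,-B,C_1,\ldots]=[\ldots,A-1,1,B-1,-C_1,\ldots]$ does the same thing one sign change at a time by negating the suffix. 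Both work, but the ``main technical obstacle'' you flag largely dissolves on inspection: a sign change can only occur at a junction between an ${\bf a}$-block and a separator $2c_i$, so the entries $A$ and $B$ decremented at the moment a sign change is removed are always among $a_1$, $a_m$, or $|2c_i|$ with $c_i\neq0$, hence at least $2$ (this uses that $[a_1,\ldots,a_m]$ is standard, that $\varepsilon_1=1$, and the Remark's disposal of the degenerate $c_i=0$ cases); interior entries of an ${\bf a}$-block are only ever negated wholesale, never decremented. Zeros can still appear (e.g.\ when $|2c_i|=2$ and both $\psi(i)$ and $\bar{\psi}(i)$ equal $1$ one obtains $\ldots,1,0,1,\ldots$, exactly as in case $(2)$ of Lemma \ref{lem:nonpositivecfe} with $b_1=2$), but they always lie in the already-processed nonnegative prefix, so the collapse $[\ldots,x,0,y,\ldots]=[\ldots,x+y,\ldots]$ merges two entries of the same sign and preserves both the running sum and the remaining sign-change count. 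With that observation your cascade closes up and is a legitimate variant of the paper's proof; the block-at-a-time lemma simply builds the same point into its statement.
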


Remark that 
\[
 \sum_{i=1}^{2n} (\psi (i) + \bar{\psi} (i)) 
\]
is the number of sign changes. 
In order to prove Theorem \ref{thm:crossingnumber}, 
we prepare the following lemma. 
Namely, negative integers in a continued fraction expansion 
can be changed into positive integers. 

\begin{lemma}\label{lem:nonpositivecfe}
Let $a_1,\ldots, a_k, b_1, \ldots, b_l, c_1, \ldots, c_m$ be integers. 
Then we have 
\begin{enumerate}
\item case $l \geq 2$: 
\begin{align*}
& [a_1, \ldots, a_k, - b_1, -b_2,\ldots,-b_{l-1}, - b_l,c_1,\ldots, c_m] = \\
& [a_1, \ldots, ,a_{k-1},a_k-1,1, b_1-1, b_2, \ldots,b_{l-1}, b_l-1,1,c_1-1,c_2,\ldots, c_m], 
\end{align*}
\item case $l =1$ and $b_1 \geq 2$: 
\[
 [a_1, \ldots, a_k, - b_1,c_1,\ldots, c_m] = 
 [a_1, \ldots, ,a_{k-1},a_k-1,1, b_1-2,1,c_1-1,c_2,\ldots, c_m], 
\]
\item case $l \geq 2$: 
\[
 [a_1, \ldots, a_k, - b_1, \ldots, - b_l] = 
 [a_1, \ldots, ,a_{k-1},a_k-1,1, b_1-1, b_2, \ldots, b_l],  
\]
\item case $l =1$ and $b_1 \geq 2$: 
\[
 [a_1, \ldots, a_k, - b_1] = 
 [a_1, \ldots, ,a_{k-1},a_k-1,1, b_1-1]. 
\]
\end{enumerate}
\end{lemma}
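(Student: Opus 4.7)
The four identities all preserve the common prefix $a_1,\dots,a_{k-1}$, so each claim reduces to showing that the ``descending tail sums'' from position $k$ agree on the two sides. Writing $T^L$ for the tail sum of the entries strictly after $a_k$ on the left-hand side and $T^R$ for the tail sum of the entries strictly after $a_k - 1$ on the right-hand side, equality of the full continued fractions follows from the single arithmetic identity $T^R = T^L/(T^L + 1)$. Each case thus becomes a rational-function identity, and my plan is to verify it by direct computation after expressing $T^L$ and $T^R$ as rational functions of the still-unprocessed portion of the continued fraction.

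I would proceed case-by-case from simplest to most complex. Case (4) is a one-line expansion showing that both $[x,-b]$ and $[x-1,1,b-1]$ equal $b/(xb-1)$. Case (3) reduces to the same arithmetic as case (4) via the sign pull-out identity $[-b_1,\dots,-b_l] = -[b_1,\dots,b_l]$: both sides of the identity then evaluate to $B/(xB-1)$, where $B := b_1 + 1/(b_2 + \cdots + 1/b_l)$ takes over the role formerly played by the single integer $b$.

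Cases (2) and (1) introduce the trailing positive block $c_1,\dots,c_m$, which I would encode by the descending sum $C := c_1 + 1/(c_2 + \cdots + 1/c_m)$. For case (2), a direct outward computation shows that both sides equal $(bC-1)/(xbC-x-C)$. For case (1), the argument combines the techniques of cases (2) and (3): the left-boundary transformation at the junction $(a_k,-b_1)$ and the right-boundary transformation at the junction $(-b_l, c_1)$ are effectively independent, while the interior block $-b_2,\dots,-b_{l-1}$ contributes exactly as in case (3) by sign pull-out.

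The main obstacle is the bookkeeping in case (1), where four local changes occur simultaneously: the decrement of $a_k$ with an inserted ``$1$'' after it, the decrement of $b_1$, the decrement of $b_l$ with an inserted ``$1$'' after it, and the decrement of $c_1$. Since each change is local and the signs in the interior of the negative block combine cleanly via the sign pull-out identity, the verification is modular once the algebraic framework is set up; the remaining work is purely mechanical manipulation of the resulting rational expressions.
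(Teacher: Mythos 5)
Your plan is correct, and the key reductions do check out: with $T$ denoting the value of the tail strictly after the $k$-th entry, two continued fractions sharing the prefix $a_1,\ldots,a_{k-1}$ and differing by $a_k\mapsto a_k-1$ agree exactly when $T^R=T^L/(T^L+1)$, and this identity holds in each of the four cases (e.g.\ in case (1), writing $E=b_2+1/(b_3+\cdots+1/(b_l-1/C))$, one finds $T^L=-(b_1+1/E)$ and $T^R=(b_1E+1)/((b_1-1)E+1)$, which match). Your route is, however, genuinely different from the paper's: the paper uses the standard $2\times 2$ matrix representation of continued fractions, in which $[x_1,\ldots,x_m]=q/p$ is read off from the product of the matrices with rows $(x_i,1)$ and $(1,0)$, and shows by a block computation that the matrix product of the left-hand side equals $(-1)^l$ (resp.\ $-1$) times that of the right-hand side, whence the fractions coincide. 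What the matrix approach buys is robustness: the lemma allows arbitrary integers, so in a literal nested-fraction computation intermediate tails can be $0$ or $\infty$ (for instance $b_l-1/C=0$, or $T^L=-1$), and a fully rigorous version of your argument must either work projectively in $\mathbb{Q}\cup\{\infty\}$ with M\"obius-transformation conventions or treat the entries as formal variables and argue that the rational identity specializes wherever both sides are defined; the matrix identity is unconditional and uniform across all four cases. What your approach buys is transparency: it isolates the purely local nature of each boundary modification (the replacement $a_k\mapsto a_k-1,1$ at the left junction and $b_l,c_1\mapsto b_l-1,1,c_1-1$ at the right junction) and makes explicit the sign pull-out $[-b_1,\ldots,-b_l]=-[b_1,\ldots,b_l]$ that the paper's overall factor $(-1)^l$ silently encodes. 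If you complete the mechanical verifications you sketch and add the projective convention to cover degenerate denominators, your proof is a valid alternative.
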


\begin{proof}
Recall the matrix representation of a continued fraction expansion (see for instance \cite{thomson}). 
For a continued fraction $[x_1,x_2,\ldots,x_m]$, we define $p,q$ by 
\[
\left(
\begin{array}{cc}
x_1 & 1 \\
1 & 0 
\end{array}
\right)
\left(
\begin{array}{cc}
x_2 & 1 \\
1 & 0 
\end{array}
\right)
\cdots
\left(
\begin{array}{cc}
x_m & 1 \\
1 & 0 
\end{array}
\right)
= 
\left(
\begin{array}{cc}
p & * \\
q & * 
\end{array}
\right). 
\]
It is known that we have an equality 
\[
 [x_1,x_2,\ldots,x_m] = \frac{q}{p}.
\]

We will show $(1)$ by using 
the above matrix representations of the both sides of $(1)$. 
Let $A,C$ be the matrices defined by 
\[
A = 
\left(
\begin{array}{cc}
a_1 & 1 \\
1 & 0 
\end{array}
\right)
\cdots
\left(
\begin{array}{cc}
a_{k-1} & 1 \\
1 & 0 
\end{array}
\right), \qquad 
C = 
\left(
\begin{array}{cc}
c_2 & 1 \\
1 & 0 
\end{array}
\right)
\cdots
\left(
\begin{array}{cc}
c_m & 1 \\
1 & 0 
\end{array}
\right)
\]
respectively and we set $B$ by 
\[
B = 
\left(
\begin{array}{cc}
B_{11} & B_{12} \\
B_{21} & B_{22} 
\end{array}
\right) = 
\left(
\begin{array}{cc}
b_2 & 1 \\
1 & 0 
\end{array}
\right)
\cdots
\left(
\begin{array}{cc}
b_{l-1} & 1 \\
1 & 0 
\end{array}
\right).
\]
The matrix representation of the left hand side of $(1)$ is 
\begin{align*}
& 
\left(
\begin{array}{cc}
a_1 & 1 \\
1 & 0 
\end{array}
\right)
\cdots
\left(
\begin{array}{cc}
a_{k-1} & 1 \\
1 & 0 
\end{array}
\right)
\left(
\begin{array}{cc}
a_k & 1 \\
1 & 0 
\end{array}
\right)
\left(
\begin{array}{cc}
-b_1 & 1 \\
1 & 0 
\end{array}
\right)
\left(
\begin{array}{cc}
-b_2 & 1 \\
1 & 0 
\end{array}
\right)
\cdots \\
& \qquad \qquad 
\left(
\begin{array}{cc}
-b_{l-1} & 1 \\
1 & 0 
\end{array}
\right)
\left(
\begin{array}{cc}
-b_l & 1 \\
1 & 0 
\end{array}
\right)
\left(
\begin{array}{cc}
c_1 & 1 \\
1 & 0 
\end{array}
\right)
\left(
\begin{array}{cc}
c_2 & 1 \\
1 & 0 
\end{array}
\right)
\cdots
\left(
\begin{array}{cc}
c_m & 1 \\
1 & 0 
\end{array}
\right) \\
& = 
A
\left(
\begin{array}{cc}
- a_k b_1 + 1& a_k \\
-b_1 & 1 
\end{array}
\right)
\left(
\begin{array}{cc}
(-1)^{l} B_{11} & (-1)^{l+1} B_{12} \\
(-1)^{l+1} B_{21} & (-1)^{l} B_{22} 
\end{array}
\right)
\left(
\begin{array}{cc}
-b_l c_1 + 1 & -b_l \\
c_1 & 1 
\end{array}
\right)
C \\
& = 
A
\left(
\begin{array}{cc}
a_k b_1 - 1& a_k \\
b_1 & 1 
\end{array}
\right)
\left(
\begin{array}{cc}
(-1)^{l} B_{11} & (-1)^{l} B_{12} \\
(-1)^{l} B_{21} & (-1)^{l} B_{22} 
\end{array}
\right)
\left(
\begin{array}{cc}
b_l c_1 - 1 & b_l \\
c_1 & 1 
\end{array}
\right)
C \\
& = 
(-1)^{l} 
A
\left(
\begin{array}{cc}
a_k b_1 - 1& a_k \\
b_1 & 1 
\end{array}
\right)
\left(
\begin{array}{cc}
B_{11} & B_{12} \\
B_{21} & B_{22} 
\end{array}
\right)
\left(
\begin{array}{cc}
b_l c_1 - 1 & b_l \\
c_1 & 1 
\end{array}
\right)
C \\
& = 
(-1)^{l} 
\left(
\begin{array}{cc}
a_1 & 1 \\
1 & 0 
\end{array}
\right)
\cdots
\left(
\begin{array}{cc}
a_{k-1} & 1 \\
1 & 0 
\end{array}
\right)
\left(
\begin{array}{cc}
a_k - 1& 1 \\
1 & 0 
\end{array}
\right)
\left(
\begin{array}{cc}
1& 1 \\
1 & 0 
\end{array}
\right)
\left(
\begin{array}{cc}
b_1- 1 & 1 \\
1 & 0 
\end{array}
\right)
\left(
\begin{array}{cc}
b_2 & 1 \\
1 & 0 
\end{array}
\right)
\cdots \\
& \qquad \qquad 
\left(
\begin{array}{cc}
b_{l-1} & 1 \\
1 & 0 
\end{array}
\right)
\left(
\begin{array}{cc}
b_l -1& 1 \\
1 & 0 
\end{array}
\right)
\left(
\begin{array}{cc}
1 & 1 \\
1 & 0 
\end{array}
\right)
\left(
\begin{array}{cc}
c_1 - 1& 1 \\
1 & 0 
\end{array}
\right)
\left(
\begin{array}{cc}
c_2 & 1 \\
1 & 0 
\end{array}
\right)
\cdots
\left(
\begin{array}{cc}
c_m & 1 \\
1 & 0 
\end{array}
\right) .
\end{align*}
The last expression is $(-1)^l$ times the matrix representation of the right hand side of $(1)$. 
Therefore the rational numbers of the both sides of $(1)$ coincide. 

Next, we see the matrix representation of the left hand side of $(2)$: 
\begin{align*}
& 
\left(
\begin{array}{cc}
a_1 & 1 \\
1 & 0 
\end{array}
\right)
\cdots
\left(
\begin{array}{cc}
a_{k-1} & 1 \\
1 & 0 
\end{array}
\right)
\left(
\begin{array}{cc}
a_k & 1 \\
1 & 0 
\end{array}
\right)
\left(
\begin{array}{cc}
-b_1 & 1 \\
1 & 0 
\end{array}
\right)
\left(
\begin{array}{cc}
c_1 & 1 \\
1 & 0 
\end{array}
\right)
\left(
\begin{array}{cc}
c_2 & 1 \\
1 & 0 
\end{array}
\right)
\cdots
\left(
\begin{array}{cc}
c_m & 1 \\
1 & 0 
\end{array}
\right) \\
& = 
A
\left(
\begin{array}{cc}
- a_k b_1 c_1 + a_k + c_1 & - a_k b_1 + 1\\
-b_1 c_1 + 1 & -b_1 
\end{array}
\right)
C \\
& = 
(-1) A
\left(
\begin{array}{cc}
a_k b_1 c_1 - a_k - c_1 & a_k b_1 - 1\\
b_1 c_1 - 1 & b_1 
\end{array}
\right)
C \\
& = 
(-1) 
\left(
\begin{array}{cc}
a_1 & 1 \\
1 & 0 
\end{array}
\right)
\cdots
\left(
\begin{array}{cc}
a_{k-1} & 1 \\
1 & 0 
\end{array}
\right)
\left(
\begin{array}{cc}
a_k - 1 & 1 \\
1 & 0 
\end{array}
\right)
\left(
\begin{array}{cc}
1 & 1 \\
1 & 0 
\end{array}
\right)
\left(
\begin{array}{cc}
b_1 - 2& 1 \\
1 & 0 
\end{array}
\right) \\
& \qquad \qquad \qquad 
\left(
\begin{array}{cc}
1 & 1 \\
1 & 0 
\end{array}
\right)
\left(
\begin{array}{cc}
c_1 - 1 & 1 \\
1 & 0 
\end{array}
\right)
\left(
\begin{array}{cc}
c_2 & 1 \\
1 & 0 
\end{array}
\right)
\cdots
\left(
\begin{array}{cc}
c_m & 1 \\
1 & 0 
\end{array}
\right) .
\end{align*}
The last expression is also $(-1)$ times the matrix representation of the right hand side of $(2)$. 
Hence these continued fraction expressions represent the same rational number. 

The similar proof works for $(3)$ and $(4)$. 
\end{proof}

\begin{example}
Suppose that a rational number $29/81$ is expressed as $[3,-5,4,1,-2]$. 
The above arguments show that 
\[
[3,-5,4,1,-2] = [2,1,3,1,3,0,1,1] = [2,1,3,1,4,1] = [2,1,3,1,5] .
\]
The last expression is the standard continued fraction expansion 
and then the crossing number of $K(29/81)$ is 
\[
 c(K(29/81)) = c(K([3,-5,4,1,-2])) = c(K([2,1,3,1,5])) = 12. 
\]
\end{example}

In Lemma \ref{lem:nonpositivecfe}, 
if all $a_i, b_i, c_i$ are positive, 
then each integer of the right hand sides is positive or zero. 
Hence, we can obtain the standard continued fraction expansion and 
determine the crossing number of a $2$-bridge knot. 

\begin{corollary}\label{cor:36}
Let $a_i,b_i,c_i$ be positive integers. 
If $l \neq 1$ or $b_1 \geq 2$, then we have 
\begin{enumerate}
\item 
$\displaystyle{c(K([a_1, \ldots, a_k, - b_1,\ldots, - b_l,c_1,\ldots, c_m])) = 
\sum_{i=1}^k a_i + \sum_{i=1}^l b_i + \sum_{i=1}^m c_i - 2}$,
\item 
$\displaystyle{c(K([a_1, \ldots, a_k, - b_1,\ldots, - b_l])) = 
\sum_{i=1}^k a_i + \sum_{i=1}^l b_i - 1}$.
\end{enumerate}
\end{corollary}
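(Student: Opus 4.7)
The plan is to apply Lemma \ref{lem:nonpositivecfe} to convert the given continued fraction expansion into one whose entries are all non-negative integers, to compute the sum of those entries directly, and to identify this sum with the crossing number by passing to the standard continued fraction expansion via sum-preserving moves.

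For part (1) I would split according to whether $l \ge 2$ or $l = 1$, invoking Lemma \ref{lem:nonpositivecfe}(1) or (2) respectively. The hypothesis $b_1 \ge 2$ in the second case is precisely what guarantees that the entry $b_1 - 2$ appearing there is non-negative. In both subcases a direct summation shows that the entries on the right-hand side add up to $\sum a_i + \sum b_i + \sum c_i - 2$. Part (2) of the corollary is handled identically using Lemma \ref{lem:nonpositivecfe}(3) and (4), giving entry-sum $\sum a_i + \sum b_i - 1$.

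To conclude I would argue that the entry-sum of this non-negative expansion coincides with the entry-sum of the standard continued fraction expansion of the same rational number. The passage to the standard expansion is carried out by iterating the zero-deletion identity
\[
[\ldots,\alpha,0,\beta,\ldots] = [\ldots,\alpha+\beta,\ldots],
\]
together with the end-absorption identity $[\ldots,\alpha,1] = [\ldots,\alpha+1]$ already recorded in Section \ref{sect:cfe}; both moves manifestly preserve the sum of entries. The crossing-number formula $c(K([a_1,\ldots,a_m])) = \sum a_i$ for standard expansions then yields the required identity.

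The main obstacle I anticipate is the bookkeeping for the boundary values $a_k = 1$, $b_1 = 1$, $b_l = 1$, $c_1 = 1$, or $c_m = 1$, which introduce $0$s or $1$s at strategic places in the output of Lemma \ref{lem:nonpositivecfe}. In each such case one application of one of the sum-preserving moves above restores a standard form without altering the entry-sum, so the stated formulas remain valid.
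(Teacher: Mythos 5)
Your proposal is correct and follows essentially the same route as the paper, which derives the corollary directly from Lemma \ref{lem:nonpositivecfe} by observing that the right-hand sides there have non-negative entries summing to $\sum a_i + \sum b_i + \sum c_i - 2$ (resp.\ $\sum a_i + \sum b_i - 1$), and then passes to the standard expansion via the sum-preserving zero-deletion and $1$-absorption moves before invoking the Tait-conjecture formula $c(K([a_1,\ldots,a_m]))=\sum a_i$. Your explicit attention to the boundary cases (entries equal to $0$ or $1$ after applying the lemma) only makes explicit what the paper leaves implicit.
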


Corollary \ref{cor:36} suggests how to determine the crossing number 
without using the explicit standard continued fraction expansion. 
To be precise, 
it is sufficient to compute the sum of the absolute values in a continued fraction expansion 
and to count the number of sign changes. 
In the above example, 
the signs of components in $[3,-5,4,1,-2]$ are changed $3$ times. 
Then the crossing number is 
\[
 c(K([3,-5,4,1,-2])) = |3| + |-5| + |4| + |1| + |-2| - 3 = 12. 
\]
These arguments show Theorem \ref{thm:crossingnumber}.

\begin{proof}[Proof of Theorem \ref{thm:crossingnumber}]
The sum of the absolute values of components in $\tilde{r}$ is 
\[
  (2n + 1) |{\bf a}| + \sum_{i=1}^{2n} (2 |c_i|) .  
\]
By Lemma \ref{lem:nonpositivecfe}, 
if the signs in a continued fraction expansion of $\tilde{r}$ are changed $k$ times, 
then the crossing number of $K(\tilde{r})$ is decreased by $k$, from the above value. 
The number of sign changes in $\tilde{r}$ is 
\[
 \sum_{i=1}^{2n} (\psi (i) + \bar{\psi} (i)) 
\]
by definition. 
Since $\tilde{r}$ is an expression of type $2n+1$ with respect to standard ${\bf a}$, 
we can apply Corollary \ref{cor:36}. 
Therefore this completes the proof.
\end{proof}

We define 
$\bar{c}_i$ by $2 |c_i| - \psi (i) - \bar{\psi} (i)$. 
Then $\bar{c}_i$ is not negative. 

\begin{proposition}\label{prop:ci}
Suppose that $\tilde{r}$ is as above. 
Then we have $\bar{c}_i \geq 0 $ for any $i \,\,(1 \leq i \leq 2n)$. 
\end{proposition}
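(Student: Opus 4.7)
The plan is to prove this by a straightforward case analysis on whether $c_i$ is zero or nonzero, using only the defining inequalities for $\psi(i)$ and $\bar{\psi}(i)$. Both indicator functions take values in $\{0,1\}$, so $\psi(i) + \bar{\psi}(i) \in \{0,1,2\}$; thus $\bar{c}_i \geq 0$ will follow once we show $2|c_i| \geq \psi(i) + \bar{\psi}(i)$ in every case.

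First I would handle the case $c_i \neq 0$. Here $|c_i| \geq 1$, so $2|c_i| \geq 2 \geq \psi(i) + \bar{\psi}(i)$, and the inequality $\bar{c}_i \geq 0$ is immediate with nothing further to check. This is the generic case and requires no information about the signs $\varepsilon_i$, $\varepsilon_{i+1}$ at all.

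Next I would handle the case $c_i = 0$, which is the only place where one has to be slightly careful. When $c_i = 0$ we have $\varepsilon_i \cdot c_i = 0$ and $c_i \cdot \varepsilon_{i+1} = 0$, both of which are $\geq 0$. By the definitions this forces $\psi(i) = 0$ and $\bar{\psi}(i) = 0$, so $\bar{c}_i = 2|c_i| - \psi(i) - \bar{\psi}(i) = 0$. In particular $\bar{c}_i \geq 0$, with equality exactly here.

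There is no real obstacle; the statement is essentially a bookkeeping observation about the definitions of $\psi$ and $\bar{\psi}$, and the fact that the penalty for sign changes at the $i$-th block is bounded above by $2$ while the contribution $2|c_i|$ is either $0$ (in which case no sign change can occur) or at least $2$. The role of the proposition in the paper is to guarantee that each summand in the crossing-number formula of Theorem \ref{thm:crossingnumber} makes a nonnegative contribution, which is what allows subsequent lower-bound estimates on $c(K(\tilde{r}))$.
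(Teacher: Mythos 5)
Your proof is correct and follows exactly the same two-case argument as the paper: for $c_i \neq 0$ one has $2|c_i| \geq 2 \geq \psi(i) + \bar{\psi}(i)$, and for $c_i = 0$ both defining products vanish, forcing $\psi(i) = \bar{\psi}(i) = 0$ and hence $\bar{c}_i = 0$. No differences worth noting.
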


\begin{proof}
If $c_i \neq 0$, then $2 |c_i| \geq 2$. 
On the other hand, $\psi (i)$ and $\bar{\psi} (i)$ are $0$ or $1$, 
and then we get $\bar{c}_i \geq 0 $. 
If $c_i = 0$, then $\psi (i) = 0$ and $\bar{\psi} (i) = 0$ by definition. 
Therefore $\bar{c}_i = 0$ in this case. 
\end{proof}

\section{Simon's conjecture}

Simon's conjecture for $2$-bridge knots is proved in \cite{BBRW}, 
and for all knots in \cite{agolliu}, as mentioned in Section \ref{sect:intro}. 
In this section, we investigate how many knot groups a $2$-bridge knot group maps onto. 
 
Let $EK(n)$ be the maximal number of knots 
whose knot groups a $2$-bridge knot group with $n$ crossings admits epimorphisms onto.  
Theorem \ref{thm:crossingnumber} and Proposition \ref{prop:ci} imply the following, 
which is one of the main results in this paper. 
It gives us a rough estimation of $EK(n)$. 

\begin{theorem}\label{cor:39}
Let $K(\tilde{r})$ be a $2$-bridge knot. 
If there exists an epimorphism from $G(K(\tilde{r}))$ onto the knot group of another knot $K$, 
then 
\[
 c(K(\tilde{r})) \geq 3 \, c(K). 
\]
In particular, 
all the $2$-bridge knots $K$ with up to $8$ crossings are minimal,
that is to say, if $G(K)$ admits an epimorphism onto a knot group $G(K')$, 
then $K'$ is equivalent to $K$ or the trivial knot. 
\end{theorem}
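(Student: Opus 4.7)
The plan is to read the inequality directly off Theorem \ref{thm:crossingnumber} and Proposition \ref{prop:ci}, after placing ourselves in the ORS framework.

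First, I would invoke the standing fact that every epimorphism from a $2$-bridge knot group onto a nontrivial knot group has a $2$-bridge knot group as target; this is part of the Boileau-Boyer-Reid-Wang picture, and is exactly the setting to which Theorem \ref{thm:ors} applies. Write $K = K(r)$ with $r=[a_1,\dots,a_m]$ the standard continued fraction expansion, so that $c(K) = |{\bf a}| = \sum a_i$.

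Next, apply Theorem \ref{thm:ors} to express
\[
 \tilde{r} = [\varepsilon_1 {\bf a}, 2c_1, \varepsilon_2 {\bf a}^{-1}, 2c_2, \ldots, \varepsilon_{2n+1} {\bf a}]
\]
for some $n \geq 0$. Since $K(\tilde{r})$ is by hypothesis a knot different from $K$, the case $n=0$ is excluded: if $n=0$ then $\tilde{r}=\pm{\bf a}$, and Schubert's theorem identifies $K(\tilde{r})$ with $K(r)$ up to mirror image. Therefore $n \geq 1$.

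Combining Theorem \ref{thm:crossingnumber} with the nonnegativity statement of Proposition \ref{prop:ci}, we get
\[
 c(K(\tilde{r})) = (2n+1)|{\bf a}| + \sum_{i=1}^{2n} \bar{c}_i \;\geq\; (2n+1)\,c(K) \;\geq\; 3\,c(K),
\]
which is the claimed inequality. The minimality statement then follows by a short numerical check: if $c(K(\tilde{r})) \leq 8$ and there were a nontrivial knot $K' \not\simeq K(\tilde{r})$ with $G(K(\tilde{r})) \twoheadrightarrow G(K')$, the inequality would force $c(K') \leq 8/3 < 3$, contradicting the fact that the trefoil (crossing number $3$) is the smallest nontrivial knot.

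The only nonroutine ingredient is the first step, namely knowing that the target of the epimorphism is itself a $2$-bridge knot; once this is in place, the entire argument is a bookkeeping application of results already established earlier in this section.
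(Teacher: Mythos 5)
Your proposal is correct and follows essentially the same route as the paper: reduce to a $2$-bridge target via Boileau--Boyer--Reid--Wang (and Silver--Whitten), apply Theorem \ref{thm:ors} to get a type-$(2n+1)$ expansion, and conclude from Theorem \ref{thm:crossingnumber} together with Proposition \ref{prop:ci} that $c(K(\tilde r)) \geq (2n+1)c(K) \geq 3c(K)$. The only (harmless) difference is that you make explicit the exclusion of $n=0$, which the paper leaves implicit, while the paper explicitly notes that the inequality is trivial when $K$ is the unknot, which you leave implicit.
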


\begin{proof}
By \cite{BBRW}, \cite{silverwhitten}, 
if $G(K(\tilde{r}))$ admits an epimorphism onto $G(K)$, 
then $K$ is also a $2$-bridge knot or the trivial knot. 
In the case that $K$ is the trivial knot, the inequality, which we show, holds obviously. 
Next, we assume that $K$ is a $2$-bridge knot and 
that $r$ is the corresponding rational number.
Take the standard continued fraction expansion $[a_1,a_2,\ldots, a_m]$ of $r$, 
then $\tilde{r}$ has an expansion of type $2n+1$ with respect to ${\bf a} = (a_1,a_2,\ldots, a_m)$. 
By Theorem \ref{thm:crossingnumber}, we have 
\begin{align*}
c(K(\tilde{r})) 
&= 
(2n + 1) |{\bf a}| + \sum_{i=1}^{2n} (2 |c_i| - \psi (i) - \bar{\psi} (i)) \\ 
&= (2n + 1) \, c(K) + \sum_{i=1}^{2n} \bar{c}_i \\
&\geq (2n + 1) \, c(K) \qquad \mbox{by Proposition \ref{prop:ci}} \\
&\geq 3 \, c(K). 
\end{align*}
Furthermore, since a non-trivial knot has at least $3$ crossings, 
all the $2$-bridge knots with up to $8$ crossings are minimal. 
\end{proof}

\begin{remark}
The previous paper \cite{kitano-suzuki1} shows that 
the knot groups of $7$ knots with less than $9$ crossings 
admit epimorphisms onto the trefoil knot group. 
To be precise, they are $3$-bridge knots 
$8_5,8_{10},8_{15},8_{18},8_{19},8_{20},8_{21}$. 
Then this inequality does not hold for $3$-bridge knots. 
\end{remark}

Ernst and Sumners in \cite{ernstsumners} determined the number $TK(n)$ of $2$-bridge knots 
in terms of the crossing number $n \geq 3$ as follows: 
\[
TK(n) =
\left\{
\begin{array}{ll}
\frac{1}{3} \left( 2^{(n-3)} + 2^{(n-4)/2} \right) & n \equiv 0 \pmod 4 \\
\frac{1}{3} \left( 2^{(n-3)} + 2^{(n-3)/2} \right) & n \equiv 1 \pmod 4 \\
\frac{1}{3} \left( 2^{(n-3)} + 2^{(n-4)/2} - 1 \right) \quad & n \equiv 2 \pmod 4 \\
\frac{1}{3} \left( 2^{(n-3)} + 2^{(n-3)/2} + 1 \right) & n \equiv 3 \pmod 4 
\end{array}
\right. .
\]
Then we can estimate $EK(n)$ by using Theorem \ref{cor:39}: 
\[
EK(n) \leq \sum_{k=3}^{\lfloor n/3 \rfloor} TK(k) .
\]
These numbers are obtained as shown in Table \ref{tbl:roughestimation}. 
\begin{table}[h]
\caption{$\sum_{k=3}^{\lfloor n/3 \rfloor} TK(k)$}\label{tbl:roughestimation}
\begin{center}
\begin{tabular}{|c|r|r|r|r|r|r|r|r|}
\hline 
$n$ & $9,10,11$ & $12,13,14$ & $15,16,17$ & $18,19,20$ & $21,22,23$ & $24,25,26$ 
& $27,28,29$ & $30,31,32$ \\
\hline
& $1$ & $2$ & $4$ & $7$ & $14$ & $26$ & $50$ & $95$ \\
\hline
\end{tabular}
\end{center}
\end{table}

In particular, we obtain that the knot groups of $2$-bridge knots with $12,13,$ and $14$ crossings 
map onto at most two knot groups, 
which are the trefoil knot group $G(3_1)$ and the figure eight knot group $G(4_1)$. 
On the other hand, 
Garrabrant, Hoste, and Shanahan studied an upper bound for a set of $2$-bridge knots 
with respect to epimorphisms between their knot groups. 
We recall Garrabrant-Hoste-Shanahan's arguments more precisely. 
Let ${\bf a} = (a_1,a_2,\ldots,a_{2n})$ be a vector such that 
\begin{itemize}
\item[(1)] each $a_i \in \{-2,0,2\}$, 
\item[(2)] $a_1 \neq 0$ and $a_{2n} \neq 0$, 
\item[(3)] if $a_i = 0$, then $a_{i-1} = a_{i+1} = \pm 2$.
\end{itemize}
We call such a $[a_1,a_2,\ldots,a_{2n}]$ an {\it even standard continued fraction expansion}. 
Remark that if we consider ${\bf a} = (a_1,a_2,\ldots,a_{2n})$ up to equivalent relations 
${\bf a} = \pm {\bf b}$ and ${\bf a} = \pm {\bf b}^{-1}$ 
where ${\bf b}^{-1}$ is ${\bf b}$ read backwards, 
then a $2$-bridge knot can be expressed uniquely as $K([a_1,a_2,\ldots,a_{2n}])$ 
by using an even standard continued fraction expansion. 

\begin{proposition}[Garrabrant-Hoste-Shanahan \cite{GHS}]\label{prop:ghs}
Let $[a_1,a_2,\ldots,a_{2n}]$ and $[b_1,b_2,\ldots,b_{2n}]$ be 
even standard continued fraction expansions of the same length. 
If a $2$-bridge knot group admits epimorphisms 
onto $G(K([a_1,a_2,\ldots, a_{2n}]))$ and $G(K([b_1,b_2,\ldots, b_{2n}]))$, 
then $(a_1,a_2,\ldots,a_{2n}) = (b_1,b_2,\ldots,b_{2n})$. 
\end{proposition}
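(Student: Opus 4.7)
The plan is to use Theorem~\ref{thm:ors} to obtain two simultaneous continued fraction expansions of the rational number $\tilde{r}$ corresponding to the common source $2$-bridge knot, and then to read off ${\bf a}$ and ${\bf b}$ as the initial block of one and the same even standard continued fraction expansion of $\tilde{r}$. Concretely, Theorem~\ref{thm:ors} applied to each of the two epimorphisms yields
\begin{align*}
\tilde{r} &= [{\bf a}, 2c_1, \varepsilon_2 {\bf a}^{-1}, 2c_2, \varepsilon_3 {\bf a}, \ldots, \varepsilon_{2k+1}{\bf a}] \\
  &= [{\bf b}, 2d_1, \varepsilon_2'{\bf b}^{-1}, 2d_2, \varepsilon_3'{\bf b}, \ldots, \varepsilon_{2l+1}'{\bf b}]
\end{align*}
for some $k,l \geq 0$, with leading signs $\varepsilon_1 = \varepsilon_1' = 1$.

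The next step is to convert each ORS expansion into the even standard continued fraction expansion of $\tilde{r}$. Entries coming from the ${\bf a}$-blocks already lie in $\{-2,0,2\}$ by hypothesis, so only the separators $2c_i$ need attention. For each separator with $|c_i|\geq 2$ one rewrites
\[
 [\ldots, x, 2c_i, y, \ldots] = [\ldots, x, \underbrace{\epsilon, 0, \epsilon, 0, \ldots, 0, \epsilon}_{|c_i|\text{ copies of }\epsilon}, y, \ldots]
\]
with $\epsilon = 2\,{\rm sign}(c_i)$, by iterated zero-insertion (reversing the zero-deletion identity from Section~\ref{sect:cfe}); separators with $|c_i|\leq 1$ are left untouched. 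A direct verification of the even standard constraints (endpoints $\pm 2$, and every $0$ flanked by $\pm 2$'s) shows that the resulting sequence is the even standard continued fraction expansion of $\tilde{r}$. Since this rewriting is purely local to each separator position, the initial $2n$ entries are preserved and equal $(a_1,\ldots,a_{2n})={\bf a}$. Running the same argument on the ${\bf b}$-expansion shows that the initial $2n$ entries of the same expansion must equal ${\bf b}$, and by the uniqueness of the even standard continued fraction expansion of a rational number, we conclude ${\bf a}={\bf b}$.

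The main obstacle I foresee is the bookkeeping in the degenerate case $c_i=0$. By the Remark following Theorem~\ref{thm:ors}, this case forces $\varepsilon_i\varepsilon_{i+1}=1$, and the $0$ separator in the ORS expansion then sits between $a_{2n}$ and $\varepsilon_{i+1}a_{2n}$, both of which lie in $\{\pm 2\}$. One must confirm that no further simplification (in particular, no zero-deletion that would merge entries across the ${\bf a}$-boundary) is needed to reach the canonical form; this is ensured by the observation that such a merge would produce an entry $\pm 4 \notin \{-2,0,2\}$, forcing one to keep the $0$ in place. A secondary subtlety is to pin down that the uniqueness being used is that of the even standard expansion of a given \emph{rational number}, which is strictly stronger than the uniqueness up to $\pm{\bf c}^{\pm 1}$ of the even standard expansion of a $2$-bridge knot, and is what permits the literal conclusion ${\bf a}={\bf b}$ rather than mere equivalence.
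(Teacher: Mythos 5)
First, note that the paper itself offers no proof of this proposition: it is quoted from \cite{GHS} and used as a black box, so there is no internal argument to compare yours against. Judged on its own merits, your strategy --- apply Theorem~\ref{thm:ors} with ${\bf a}$ and ${\bf b}$ as the chosen expansions of the two targets, locally rewrite each separator $2c_i$ as $\epsilon,0,\epsilon,\ldots,0,\epsilon$ to land in even standard form, and invoke uniqueness of that form --- is sound and is essentially how Garrabrant--Hoste--Shanahan argue. Your local checks are the right ones: each block begins and ends in $\pm 2$, a zero separator forces $\varepsilon_i\varepsilon_{i+1}=1$ by the Remark and is then flanked by equal entries as condition (3) requires, no merge across a block boundary is possible since it would create a $\pm 4$, and the parity count showing the resulting word has even length works out.

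The one genuine gap is the step where you treat the two ORS expansions as expansions of one and the same rational number $\tilde r$. Theorem~\ref{thm:ors}, read literally with the normalization $\varepsilon_1=1$, cannot hold for a single fixed fraction and both targets in general: since $K({\bf a})=K(-{\bf a})$, the source group also maps onto $G(K(-{\bf a}))$, and your argument would then force ${\bf a}=-{\bf a}$. What the theorem actually supplies is an expansion of \emph{some} Schubert representative of the source knot, and the two applications may produce expansions of different (equivalent) fractions. Uniqueness of the even standard expansion of a rational number therefore yields only ${\bf b}=\pm{\bf a}^{\pm 1}$ (negation and reversal are exactly how the Schubert moves act on even standard words), i.e.\ $K({\bf a})=K({\bf b})$. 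That is the correct and intended content --- the paper's own remark says even standard expansions classify $2$-bridge knots only up to $\pm{\bf c}^{\pm 1}$ --- but it is weaker than the ``literal conclusion ${\bf a}={\bf b}$'' that your closing paragraph insists on. Fix a choice of representative, or state the conclusion up to this equivalence, and the proof closes.
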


For example, the trefoil is $3_1 = K([2,-2])$ and the figure eight knot is $4_1 = K([2,2])$. 
Since the lengths of these even standard continued fraction expansions are the same, 
there does not exist a $2$-bridge knot whose knot group admits 
epimorphisms onto $G(3_1)$ and $G(4_1)$ simultaneously, by Proposition \ref{prop:ghs}. 
Similarly, a $2$-bridge knot group maps onto 
the knot group of at most only one of $\{5_1, 5_2, 6_1, 6_2, 6_3\}$, 
since 
\[
 5_1 = K([2,-2,2,-2]), ~ 5_2 = K([2,-2,0,-2]), 
\]
\[
6_1 = K([2,0,2,2]), ~ 6_2 = K([2,2,-2,2]), ~ 6_3 = K([2,-2,-2,2]).
\] 

In order to extend this argument, we consider the relationship 
between the length of an even standard continued fraction expansion and 
the crossing number. 

\begin{proposition}\label{prop:ineq}
Let $[a_1,a_2,\ldots,a_{2n}]$ be an even standard continued fraction expansion. 
Then the crossing number $c(K([a_1,a_2,\ldots,a_{2n}]))$ satisfies the following inequalities: 
\[
 2n+1 \leq c(K([a_1,a_2,\ldots,a_{2n}])) \leq 4n .
\]
\end{proposition}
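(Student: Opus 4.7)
The plan is to compute $c(K([a_1,\ldots,a_{2n}]))$ by applying Lemma \ref{lem:nonpositivecfe} (the sign-rectification step used in Corollary \ref{cor:36} and in the proof of Theorem \ref{thm:crossingnumber}) and expressing the answer as ``sum of absolute values minus number of sign changes''. Each of those two quantities is then easy to bound using the definition of an even standard continued fraction expansion.

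Let $z$ denote the number of zeros among $a_1,\ldots,a_{2n}$. Since each nonzero $a_i$ satisfies $|a_i|=2$, the sum of absolute values equals $4n-2z$. The defining constraint that $a_{i-1}=a_{i+1}=\pm 2$ whenever $a_i=0$ allows me to iterate the identity $[\ldots,a_{i-1},0,a_{i+1},\ldots]=[\ldots,a_{i-1}+a_{i+1},\ldots]$ and eliminate every zero. Because each merged entry shares the sign of its two pre-merger neighbors, this preserves both the sum of absolute values and the number of sign changes $s$, and leaves a continued fraction expansion of length $2n-2z$ whose every entry has absolute value at least $2$.

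Mirroring the proofs of Corollary \ref{cor:36} and Theorem \ref{thm:crossingnumber}, I would then apply Lemma \ref{lem:nonpositivecfe} iteratively to each maximal negative block in the zero-eliminated sequence. Each sign change reduces the running total by one, producing the formula
\[
c(K([a_1,\ldots,a_{2n}])) = (4n-2z) - s.
\]
The upper bound is immediate from $s,z \geq 0$, giving $c(K) \leq 4n$. For the lower bound, a sequence of length $2n-2z$ admits at most $2n-2z-1$ sign changes, so $s \leq 2n-2z-1$, and therefore $c(K) \geq (4n-2z) - (2n-2z-1) = 2n+1$.

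I expect the main obstacle to be justifying the identity $c(K) = (4n-2z) - s$ when multiple negative blocks are present. Corollary \ref{cor:36} only treats a single block, so for several blocks one must check that iterated applications of Lemma \ref{lem:nonpositivecfe} behave additively in the sign-change count and genuinely produce a standard continued fraction expansion. The saving grace is that after zero elimination every entry has absolute value at least $2$, so the $a_k-1$ and $b_l-1$ terms appearing in Lemma \ref{lem:nonpositivecfe} all remain $\geq 1$; no new zeros are introduced, each intermediate expansion remains amenable to the next application of the lemma, and the first Tait conjecture identifies the sum of entries of the final standard expansion with $c(K)$.
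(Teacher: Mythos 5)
Your proof is correct and follows essentially the same route as the paper: delete the zeros (noting the defining condition forces merged neighbors to share a sign, so the sum of absolute values $4n-2z$ and the resulting length $2n-2z$ behave as you say), express the crossing number as that sum minus the number of sign changes via Lemma \ref{lem:nonpositivecfe}, and bound the sign changes by $2n-2z-1$. The paper's proof is the same computation written with $2n'=2n-2\ell$ in place of your $2n-2z$, and it likewise cites the argument of Theorem \ref{thm:crossingnumber} for the ``sum minus sign changes'' formula rather than re-deriving it for multiple negative blocks.
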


\begin{proof} 
First of all, we delete zeros in $(a_1,a_2,\ldots,a_{2n})$ as before: 
\[
 [a_1,a_2,\ldots,a_{2n}] = [a_1',a_2',\ldots,a_{2n'}'] ,
\]
where $a_i' \in 2 {\mathbb Z} \setminus \{ 0 \}$. 
Let $\ell$ be the number of zeros in $(a_1,a_2,\ldots,a_{2n})$. 
Then we have $2 \ell = 2 n - 2 n'$ and 
\[
 \sum_{i=1}^{2n'} \left( | a_i' | - 2 \right) = 2 \ell .
\]
It follows that 
\[
\sum_{i=1}^{2n'} | a_i' | = 2 \ell + 4 n' = 2 n + 2 n' . 
\]
By the same argument of Proof of Theorem \ref{thm:crossingnumber}, 
we obtain 
\begin{align*}
c(K([a_1,a_2,\ldots,a_{2n}])) 
= c(K([a_1',a_2',\ldots,a_{2n'}'])) 
= \sum_{i=1}^{2n'} | a_i' | - k 
= \sum_{i=1}^{2n} | a_i | - k , 
\end{align*}
where $k$ is the number of sign changes in $(a_1',a_2',\ldots,a_{2n'}')$. 
Note that $0 \leq k \leq 2 n' - 1$. 
(If all $a_i'$ are positive or negative, then $k=0$. 
If $a_i' \cdot a_{i+1}'< 0 $ for all $i ~ (0 \leq i \leq 2n'-1)$, then $k=2n'-1$.) 
By $|a_i| \leq 2$, 
\[
 \sum_{i=1}^{2n} | a_i | - k \leq 4n .
\]
Moreover, we obtain
\begin{align*}
\sum_{i=1}^{2n'} | a_i' | - k 
&= 2 n + 2 n' - k \\ 
& \geq 2 n + 2 n' - (2n'-1) \\
&= 2n+1 .
\end{align*}
This completes the proof. 
\end{proof}

By Proposition \ref{prop:ghs}, 
if two distinct $2$-bridge knots $K,K'$ have 
even standard continued fraction expansions of the same length, 
then there does not exist a $2$-bridge knot whose knot group maps onto $G(K)$ and $G(K')$. 
Combined with Proposition \ref{prop:ghs} and Proposition \ref{prop:ineq}, 
we can estimate $EK(n)$ more strictly.  

\begin{theorem}\label{thm:simons}
The number $EK(n)$ satisfies 
\[
 EK(n) \leq \left\lfloor \frac{n-3}{6} \right\rfloor .
\]
\end{theorem}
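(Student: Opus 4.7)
The plan is to combine the three ingredients already in place -- Theorem \ref{cor:39}, Proposition \ref{prop:ineq}, and Proposition \ref{prop:ghs} -- into a pigeonhole argument indexed by the length of the even standard continued fraction expansion of the target knot.

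First I would fix a 2-bridge knot $K(\tilde{r})$ of crossing number $n$ and let $K$ be any nontrivial knot distinct from $K(\tilde{r})$ such that $G(K(\tilde{r}))$ admits an epimorphism onto $G(K)$. By Theorem \ref{cor:39}, $K$ is itself a 2-bridge knot and $c(K) \leq n/3$. Let $2m_K$ denote the length of the even standard continued fraction expansion of $K$. Proposition \ref{prop:ineq} gives $2 m_K + 1 \leq c(K)$, and combining this with $c(K) \leq n/3$ yields $2 m_K + 1 \leq n/3$; clearing denominators and using that $m_K$ is a positive integer gives $m_K \leq \lfloor (n-3)/6 \rfloor$. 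Since $K$ is nontrivial, $m_K \geq 1$, so $m_K \in \{1, 2, \ldots, \lfloor (n-3)/6 \rfloor\}$.

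Second, I would apply Proposition \ref{prop:ghs} to rule out collisions: if two distinct target knots $K, K'$ had the same length $2 m_K = 2 m_{K'}$, then Proposition \ref{prop:ghs} (applied to any 2-bridge knot $K(\tilde{r})$ whose group surjects both) would force their even standard expansions, and hence $K$ and $K'$, to coincide. So the assignment $K \mapsto m_K$ is injective on the set of distinct nontrivial target knots, and $EK(n)$ is bounded above by the cardinality of $\{1, \ldots, \lfloor (n-3)/6 \rfloor\}$, which is exactly $\lfloor (n-3)/6 \rfloor$.

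I expect no serious obstacle. The only delicate point is book-keeping around the definition of $EK(n)$ -- confirming that the trivial knot and $K(\tilde{r})$ itself are not counted, as is consistent with the word ``another'' in Theorem \ref{cor:39} and with the entries of Table \ref{tbl:roughestimation}. Beyond that, the argument reduces to the arithmetic $2m + 1 \leq n/3 \Longrightarrow m \leq \lfloor (n-3)/6 \rfloor$, followed by direct invocations of the preceding results.
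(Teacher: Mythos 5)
Your proposal is correct and is essentially the paper's own argument: both bound $c(K)$ by $n/3$ via Theorem \ref{cor:39}, use Proposition \ref{prop:ineq} to bound the half-length $m$ of the target's even standard expansion by $\lfloor (n-3)/6 \rfloor$, and use Proposition \ref{prop:ghs} to see that distinct targets must have distinct lengths. No issues.
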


\begin{proof}
Let $K$ be a $2$-bridge knot with $n$ crossings. 
If $G(K)$ admits an epimorphism onto $G(K')$, 
then the crossing number of $K'$ is at most $\lfloor \frac{n}{3} \rfloor$, by Theorem \ref{cor:39}. 
Let $[a_1,a_2,\ldots,a_{2m}]$ be the even standard continued fraction expansion of $K'$, namely 
$K' = K([a_1,a_2,\ldots,a_{2m}])$. 
By Proposition \ref{prop:ghs}, 
$EK(n)$ is less than or equal to the number of the lengths 
of even standard continued fraction expansions. 
By Proposition \ref{prop:ineq}, we have 
\[
% \frac{\lfloor \frac{n}{3} \rfloor}{2} \leq 
2m \leq \left\lfloor \frac{n}{3} \right\rfloor - 1. 
\]
Hence we obtain 
\[
 EK(n) \leq \left\lfloor \frac{\lfloor \frac{n}{3} \rfloor - 1}{2} \right\rfloor 
= \left\lfloor \frac{n-3}{6} \right\rfloor .
\]
\end{proof}

For example, the knot group of a $2$-bridge knot with $50$ crossings 
maps onto at most $7$ distinct knot groups. 
Actually, we can get the precise number $EK(n)$ for $n \leq 30$ by computer program: 
\[
 EK(n) = 
\left\{
\begin{array}{ll}
0 \quad & n = 3,4,5,6,7,8 \\
1 & n = 9,10,11,12,13,14,18,19,20,24 \\
2 & n = 15,16, 17, 21,22,23,25,26,27,28,29,30 
\end{array}
\right. .
\] 
In particular, $EK(n)$ is less than $3$ for all $n \leq 30$. 
On the other hand, 
it is easy to see that $G(K([45]))$ maps onto $G(K([3]))$, $G(K([5])$, $G(K([9]))$, and $G(K([15]))$. 
It follows that $EK(45) \geq 4$. 
\begin{problem}
Does there exist a $2$-bridge knot with less than $45$ crossings 
whose knot group maps onto $3$ (or $4$) distinct knot groups? 
In general, determine $EK(n)$ explicitly for all $n \geq 31$. 
\end{problem}

\section{generation function}

As shown in Example \ref{ex:ORS}, 
there exist $3$ distinct $2$-bridge knots with $9$ crossings whose knot groups admit 
epimorphisms onto the trefoil knot group. 
In this section, we generalize this result. 
Namely, for a given $2$-bridge knot $K(r)$,  
we determine the number of $2$-bridge knots $K(\tilde{r})$ 
which admit epimorphisms $\varphi : G(K(\tilde{r})) \to G(K(r))$, 
in terms of $c(K({\tilde r}))$.
%The following is the generating function for these numbers. 

\begin{theorem}\label{thm:generatingfunction}
For a given rational number $r$, 
we take the standard continued fraction expansion  $[a_1,a_2,\ldots,a_m]$ of $r$ 
and define the generating function as follows: 
\begin{enumerate}
\item if $(a_1,a_2,\ldots,a_m) \neq (a_m,\ldots,a_2,a_1)$, then 
\[
 f(r) = \sum_{n=1}^\infty \sum_{k=0}^\infty 2^{2n} \, 
\Big(
%\left(
\begin{array}{c}
2n+ k - 1 \\ k
\end{array}
%\right) 
\Big)
\, 
t^{(2n+1) c(K(r)) + k} ,
\]
\item if $(a_1,a_2,\ldots,a_m) = (a_m,\ldots,a_2,a_1)$, then 
\[
 f(r) = \sum_{n=1}^\infty \sum_{k=0}^\infty \, g(n,k) \, t^{(2n+1) c(K(r)) + k}
\]
where 
\[
 g(n,k) = 
\left\{
\begin{array}{ll}
2^{2n-1} 
\Big(
%\left(
\begin{array}{c}
2n+ k - 1 \\ k
\end{array}
%\right) 
\Big)
& k \mbox{ : odd} \\
2^{2n-1} 
\Big(
%\left(
\begin{array}{c}
2n+ k - 1 \\ k
\end{array}
%\right) 
\Big)
+ 
2^{n-1} 
\Big(
%\left(
\begin{array}{c}
n+ \frac{k}{2} - 1 \\ \frac{k}{2}
\end{array}
%\right) 
\Big)
& 
k \mbox{ : even} \\
\end{array}
\right. , 
\]
\end{enumerate}
and $\displaystyle{
\Big(
\begin{array}{c}
a \\ b
\end{array}
\Big) 
= \frac{a!}{b! \, (a-b)!}
}$. 
Then the number of $2$-bridge knots $K(\tilde{r})$ 
which admit epimorphisms $\varphi : G(K(\tilde{r})) \to G(K(r))$ 
is the coefficient of $t^{c(K(\tilde{r}))}$. 
\end{theorem}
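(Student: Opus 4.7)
The plan is to use Theorem \ref{thm:ors} to parametrize the rational numbers $\tilde r$ with an epimorphism $G(K(\tilde r))\to G(K(r))$ by admissible tuples $(n;\varepsilon_2,\ldots,\varepsilon_{2n+1};c_1,\ldots,c_{2n})$ (with $\varepsilon_1=1$ fixed and the exclusion $c_i=0,\,\varepsilon_i\varepsilon_{i+1}=-1$ from the Remark), then apply Theorem \ref{thm:crossingnumber} to write $c(K(\tilde r))=(2n+1)c(K(r))+k$ with $k=\sum_{i=1}^{2n}\bar c_i\ge 0$, and finally count these tuples graded by $(n,k)$ via generating functions, quotienting at the end by the Schubert equivalences among knots.

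For the tuple count I would set up a local generating function at each joint $i$. A short case analysis of the definitions of $\psi$ and $\bar\psi$ shows the following: when $\varepsilon_i=\varepsilon_{i+1}$, $\bar c_i$ ranges over the nonnegative even integers with exactly two admissible $c_i$ per value; when $\varepsilon_i\ne\varepsilon_{i+1}$ (so $c_i\ne 0$ by the Remark), $\bar c_i$ ranges over the positive odd integers, again with two admissible $c_i$ per value. These give local generating functions $2/(1-t^2)$ and $2t/(1-t^2)$ respectively. Summing over the binary choice $\varepsilon_{i+1}\in\{\pm 1\}$ at each of the $2n$ joints and multiplying gives
\[
\left(\frac{2}{1-t^2}+\frac{2t}{1-t^2}\right)^{2n}=\frac{2^{2n}}{(1-t)^{2n}},
\]
whose coefficient of $t^k$ is $N_{n,k}:=2^{2n}\binom{2n+k-1}{k}$, the total number of admissible tuples of type $2n+1$ producing crossing number $(2n+1)c(K(r))+k$.

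Passing from tuples to knots, the uniqueness portion of the Ohtsuki-Riley-Sakuma parametrization (Agol \cite{agol}, Aimi-Lee-Sakuma \cite{ALS}) together with Schubert's theorem implies that the only way two distinct admissible tuples for the same $\mathbf a$ can give the same knot is via reversal of the underlying continued fraction expansion, which converts a type $2n+1$ expansion with respect to $\mathbf a$ into one with respect to $\mathbf a^{-1}$. In case (1) $\mathbf a\ne\mathbf a^{-1}$, the reversed tuple lies outside the parameter space, so the tuples-to-knots map is injective and the knot count is exactly $N_{n,k}$, which yields the stated $f(r)$. In case (2) $\mathbf a=\mathbf a^{-1}$, reversal---followed, if needed, by the global sign flip that restores $\varepsilon_1=1$ and corresponds to passing to the mirror---defines an involution $\sigma$ on the parameter space, and Burnside's lemma gives the orbit count as $(N_{n,k}+F_{n,k})/2$. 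An easy check shows that $\sigma$-fixed tuples are precisely the palindromes with $\varepsilon_{2n+1}=1$, $\varepsilon_i=\varepsilon_{2n+2-i}$ and $c_i=c_{2n+1-i}$; the alternative fixed-point locus involving a global sign flip is empty, since it would force the middle entry $\varepsilon_{n+1}$ to vanish. For palindromic tuples $k$ is necessarily even, and rerunning the local generating-function computation on the $n$ independent half-positions gives $F_{n,k}=2^n\binom{n+k/2-1}{k/2}$ for $k$ even and $F_{n,k}=0$ otherwise; substituting into $(N_{n,k}+F_{n,k})/2$ reproduces the stated $g(n,k)$. Summing over $n\ge 1$ and $k\ge 0$ yields $f(r)$.

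The main obstacle is the first sentence of the previous paragraph: rigorously establishing that, apart from the reversal involution, distinct admissible tuples yield distinct knots. This amounts to the injectivity/uniqueness part of the ORS parametrization, which should follow by combining the characterization of $2$-bridge knot equivalence via standard continued fraction expansions recalled in Section \ref{sect:cfe} with the structural uniqueness results in \cite{ORS, ALS}. Once this is secured, both the local generating-function enumeration and the Burnside symmetrization are routine computations.
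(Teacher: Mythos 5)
Your proposal is correct and follows essentially the same route as the paper: parametrize $\tilde r$ by ORS tuples, observe that for each prescribed value of $\bar c_i$ there are exactly two admissible choices of $(c_i,\varepsilon_{i+1})$ at each of the $2n$ joints, count the distributions of $k$ over $(\bar c_1,\ldots,\bar c_{2n})$ (your product of local generating functions $2/(1-t)$ per joint is just a repackaging of the paper's $2^{2n}\binom{2n+k-1}{k}$ stars-and-bars count), and halve by the reversal symmetry when $\mathbf a=\mathbf a^{-1}$ (your Burnside computation $(N_{n,k}+F_{n,k})/2$ is identical to the paper's $(N-F)/2+F$). The injectivity point you flag as the main obstacle is handled in the paper at the same level of detail you suggest, namely by converting each $\tilde r$ to its standard continued fraction expansion via Lemma \ref{lem:nonpositivecfe} and invoking the uniqueness of standard expansions recalled in Section \ref{sect:cfe}.
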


\begin{proof}
We will count the number of $2$-bridge knots with $(2n+1) c(K(r)) + k$ crossings
which correspond to rational numbers 
%$\tilde{r}$ of type $2n+1$ with respect to $(a_1,\ldots,a_m)$: 
\[
 \tilde{r} = 
[\varepsilon_1 {\bf a}, 2 c_1, 
\varepsilon_2 {\bf a}^{-1}, 2 c_2, 
\varepsilon_3 {\bf a}, 2 c_3, 
\varepsilon_4 {\bf a}^{-1}, 2 c_4, 
\ldots, 
\varepsilon_{2n} {\bf a}^{-1}, 2 c_{2n}, \varepsilon_{2n+1} {\bf a}] , 
\]
where ${\bf a} = (a_1,\ldots,a_m)$. 
The crossing number $c(K(r))$ is $\sum_{i=1}^m a_i$. 
Compared with Theorem \ref{thm:crossingnumber}, we have 
\[
 k = \sum_{i=1}^{2n} \bar{c}_i 
= \sum_{i=1}^{2n} 2 |c_i| - \psi (i) - \bar{\psi} (i)
\]
where $\bar{c}_i \geq 0$ by Proposition \ref{prop:ci}.

Suppose that $\bar{c}_i = j \, \, (\geq 0)$. 
Then $(\varepsilon_i {\bf a}^{\pm 1}, 2 c_i, \varepsilon_{i+1} {\bf a}^{\mp 1})$, 
which is a part of $\tilde{r}$, has 
the following possibilities:
\begin{enumerate}
\item if $j$ is even and $\varepsilon_i = 1$, then 
\[
(\varepsilon_i {\bf a}^{\pm 1}, 2 c_i, \varepsilon_{i+1} {\bf a}^{\mp 1})
= ({\bf a}^{\pm 1}, j, {\bf a}^{\mp 1}) \mbox{ or } ({\bf a}^{\pm 1}, -(j+2), {\bf a}^{\mp 1}),
\]
\item if $j$ is even and $\varepsilon_i = -1$, then 
\[
(\varepsilon_i {\bf a}^{\pm 1}, 2 c_i, \varepsilon_{i+1} {\bf a}^{\mp 1})
= (-{\bf a}^{\pm 1}, -j, -{\bf a}^{\mp 1}) \mbox{ or } (-{\bf a}^{\pm 1}, j+2,- {\bf a}^{\mp 1}),
\]
\item if $j$ is odd and $\varepsilon_i = 1$, then 
\[
(\varepsilon_i {\bf a}^{\pm 1}, 2 c_i, \varepsilon_{i+1} {\bf a}^{\mp 1})
= ({\bf a}^{\pm 1}, j+1, -{\bf a}^{\mp 1}) \mbox{ or } ({\bf a}^{\pm 1}, -(j+1), -{\bf a}^{\mp 1}),
\]
\item if $j$ is odd and $\varepsilon_i = -1$, then 
\[
(\varepsilon_i {\bf a}^{\pm 1}, 2 c_i, \varepsilon_{i+1} {\bf a}^{\mp 1})
= (-{\bf a}^{\pm 1}, j+1, {\bf a}^{\mp 1}) \mbox{ or } (-{\bf a}^{\pm 1}, -(j+1), {\bf a}^{\mp 1}).
\]
\end{enumerate}
Therefore $(\varepsilon_i {\bf a}^{\pm 1}, 2 c_i, \varepsilon_{i+1} {\bf a}^{\mp 1})$ 
always has $2$ possibilities. 
Besides, there are $\displaystyle{\Big(
%\left(
\begin{array}{c}
2n+ k - 1 \\ k
\end{array}
%\right) 
\Big)}$ cases for $(\bar{c}_1,\ldots,\bar{c}_{2n})$, 
namely, 
\[
(\bar{c}_1,\bar{c}_2,\ldots,\bar{c}_{2n}) = 
(k,0,\ldots,0), (k-1,1,\ldots,0), \ldots, (0,0,\ldots,0,k).
\]
Hence the number of $2$-bridge knots with $(2n+1) c(K(r)) + k$ crossings 
is $\displaystyle{
2^{2n} 
\Big(
\begin{array}{c}
2n+ k - 1 \\ k
\end{array}
\Big)
}$ and we get the generating function of $(1)$. 
In the case of $(a_1,\ldots,a_m) = (a_m,\ldots,a_1)$, we see 
\[
K([\varepsilon_1 {\bf a}, 2 c_1, \varepsilon_2 {\bf a}^{-1}, 
\ldots, 
%\varepsilon_{2n} {\bf a}^{-1}, 
2 c_{2n}, \varepsilon_{2n+1} {\bf a}])
=
K([\varepsilon_{2n+1} {\bf a}, 2 c_{2n}, 
%\varepsilon_{2n} {\bf a}^{-1}, 
\ldots, 
\varepsilon_{2} {\bf a}^{-1}, 2 c_1, \varepsilon_1 {\bf a}]) .
\]
It implies that 
if $\tilde{r}$ is not symmetric, that is, if $\tilde{r}$ is not in the form 
\[
 [\varepsilon_1 {\bf a}, 2 c_1, \ldots, 
2 c_n, \varepsilon_{n+1} {\bf a}^{\pm 1}, 2 c_n, \ldots, 
2 c_1, \varepsilon_1 {\bf a}] , 
\]
we counted the same knot twice. 
Then the number of knots is 
\begin{align*}
& \frac{
2^{2n} 
\Big(
\begin{array}{c}
2n+ k - 1 \\ k
\end{array}
\Big) - 
2^{n} 
\Big(
\begin{array}{c}
n+ \frac{k}{2} - 1 \\ \frac{k}{2}
\end{array}
\Big)
}{2} + 
2^{n} 
\Big(
\begin{array}{c}
n+ \frac{k}{2} - 1 \\ \frac{k}{2}
\end{array}
\Big) \\
&= 
2^{2n-1} 
\Big(
\begin{array}{c}
2n+ k - 1 \\ k
\end{array}
\Big)
+
2^{n-1} 
\Big(
\begin{array}{c}
n+ \frac{k}{2} - 1 \\ \frac{k}{2}
\end{array}
\Big).
\end{align*}
Notice that if $k$ is odd, then $\tilde{r}$ must not be symmetric. 
As we saw in Section \ref{sect:cfe}, if the standard continued fraction expansions are not the same, 
then the $2$-bridge knots are not equivalent. 
We can get the standard fraction expansion of $\tilde{r}$ by Lemma \ref{lem:nonpositivecfe}. 
It shows that these knots which are obtained by the Ohtsuki-Riley-Sakuma construction are not equivalent. 
This completes the proof. 
\end{proof}

\begin{example}
First, we apply Theorem \ref{thm:generatingfunction} to the trefoil knot. 
The generating function for the trefoil $K(1/3) = K([3])$ is 
\begin{align*}
 f(1/3) =& \, 3 t^9 + 4 t^{10} + 7 t^{11} + 8 t^{12} + 11 t^{13} + 12 t^{14} + 25 t^{15} 
+ 48 t^{16} + 103 t^{17} + 180 t^{18} \\ 
& + 309 t^{19} + 472 t^{20} + 743 t^{21} + 1180 t^{22} + 2045 t^{23} + 3584 t^{24} + 6391 t^{25} + 
\cdots .
\end{align*}
Then the number of $2$-bridge knots with $9$ crossings whose knot groups 
admit epimorphisms onto the trefoil knot group is the coefficient of $t^9$, 
which is $3$. 
These $2$-bridge knots are $9_1,9_6,9_{23}$, as shown in Example \ref{ex:ORS}. 
Similarly, the knot groups of $4$ (respectively \ $7$) distinct $2$-bridge knots with $10$ 
(respectively $11$) 
crossings, 
which are $10_5,10_9,10_{32},10_{40}$ 
(respectively $11a_{117},11a_{175},11a_{176},11a_{203},11a_{236},11a_{306},11a_{355}$) 
as shown in \cite{kitano-suzuki1}
(respectively \cite{HKMS}), 
admit epimorphisms onto the trefoil knot group, and so on. 

Another example shows the generating function for $5_2 = K(3/7) = K([2,3])$: 
\begin{align*}
f(3/7) =& \, 4 t^{15} + 8 t^{16} + 12 t^{17} + 16 t^{18} + 20 t^{19} + 24 t^{20} + 28 t^{21} + 
 32 t^{22} + 36 t^{23} + 40 t^{24} \\
& + 60 t^{25} + 112 t^{26} + 212 t^{27} + 376 t^{28} + 620 t^{29} + 960 t^{30} + \cdots .
\end{align*}
\end{example}

\section*{Acknowledgments}
The author wishes to express his thanks to Makoto Sakuma and Takayuki Morifuji for helpful communications. 
He also thanks an anonymous referee for some useful comments. 
This work is partially supported by KAKENHI grant No.\ 16K05159 and 15H03618 from the Japan Society for the Promotion of Science.

\end{document}